\documentclass[11pt]{amsart}
\usepackage{amsmath,amssymb,verbatim,eucal,amscd,color,mathrsfs,appendix}
\usepackage[all]{xy}
\usepackage{amsfonts,latexsym,bm}
\usepackage{tikz-cd}

\numberwithin{equation}{section}
\usepackage{iftex}
\ifPDFTeX
  \usepackage[T1]{fontenc}
  \usepackage[utf8]{inputenc}
  \usepackage{textcomp} % provide euro and other symbols
\else % if luatex or xetex
  \usepackage{unicode-math}
  \defaultfontfeatures{Scale=MatchLowercase}
  \defaultfontfeatures[\rmfamily]{Ligatures=TeX,Scale=1}
\fi
\IfFileExists{upquote.sty}{\usepackage{upquote}}{}
\IfFileExists{microtype.sty}{% use microtype if available
  \usepackage[]{microtype}
  \UseMicrotypeSet[protrusion]{basicmath} % disable protrusion for tt fonts
}{}
\makeatletter
\@ifundefined{KOMAClassName}{% if non-KOMA class
  \IfFileExists{parskip.sty}{%
    \usepackage{parskip}
  }{% else
    \setlength{\parindent}{0pt}
    \setlength{\parskip}{6pt plus 2pt minus 1pt}}
}{% if KOMA class
  \KOMAoptions{parskip=half}}
\makeatother
\usepackage{xcolor}
\ifLuaTeX
  \usepackage{selnolig}  % disable illegal ligatures
\fi
\IfFileExists{bookmark.sty}{\usepackage{bookmark}}{\usepackage{hyperref}}
\IfFileExists{xurl.sty}{\usepackage{xurl}}{} % add URL line breaks if available
\hypersetup{
  colorlinks=true,
  linkcolor=red,
  citecolor=blue,
  urlcolor=magenta,
  pdfcreator={LaTeX via pandoc}}

\theoremstyle{plain}

\newtheorem{theorem}{Theorem}[section]				
\newtheorem{proposition}[theorem]{Proposition}		
\newtheorem{corollary}[theorem]{Corollary}
\newtheorem{lemma}[theorem]{Lemma}

\theoremstyle{definition}

\newtheorem{definition}[theorem]{Definition}
\newtheorem{remark}[theorem]{Remark}
\newtheorem{question}[theorem]{Question}

\newcommand{\ABbb}{\mathbb A}

\newcommand{\CBbb}{\mathbb C}

\newcommand{\PBbb}{\mathbb P}
\newcommand{\QBbb}{\mathbb Q}

\newcommand{\ZBbb}{\mathbb Z}

\newcommand{\Acal}{\mathcal A}

\newcommand{\Hcal}{\mathcal H}

\newcommand{\Ocal}{\mathcal O}

\newcommand{\Gfrak}{\mathfrak G}

\DeclareMathOperator{\Gm}{\mathbb{G}_{m}}
\DeclareMathOperator{\Spec}{Spec}

\AtBeginDocument{%
	\def\MR#1{}
}
\AtBeginDocument{%
	\def\MR#1{}
}

\begin{document}

\author{Arnab Roy}
\title{On smooth affine surfaces with the cohomology of a smooth projective curve}
%\title{On Jouanolou's trick}  
\address{School of Mathematics, Tata Institute of Fundamental Research, Homi Bhabha Road, NavyNagar, Colaba, Mumbai 400005, India}
\email{aroy@math.tifr.res.in}

\begin{abstract}
We prove that if a smooth affine complex surface has the same rational mixed Hodge structure as a smooth projective curve of positive genus, then it admits a smooth morphism to the curve with fibers isomorphic to $\ABbb^1$. In particular, every such surface is the total space of a torsor for a line bundle on the curve.  The motivation comes from Jouanolou's trick, to which this result gives a kind of converse in dimension $2$.

\end{abstract}
\maketitle
\section{Introduction}

We prove the following:
\begin{theorem}\label{thetheorem}
Let $A$ be a smooth affine surface over $\CBbb$ such that there is an isomorphism of $\QBbb$-mixed Hodge structures
\begin{equation}\label{basic}
H^*(A,\QBbb)\cong H^*(C,\QBbb),
\end{equation}
where $C$ is a smooth projective curve of genus at least $1$. Then $A$ admits a smooth fibration over $C$ with fibers isomorphic to $\ABbb^1$.
\end{theorem}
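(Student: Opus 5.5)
We first construct a map $A\to C$ from the weight-one part of cohomology, and then identify its fibers using the Kodaira dimension of $A$.

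\textbf{Step 1: a map to a curve.} By \eqref{basic}, $H^1(A,\QBbb)$ is pure of weight $1$ and of rank $2g\ge 2$. Choose a smooth projective compactification $X\supset A$ whose boundary $D=X\setminus A$ is a simple normal crossing divisor. Purity of weight $1$ means $H^1(X,\QBbb)\to H^1(A,\QBbb)$ is surjective. Its kernel comes from $H^0(\tilde D)(-1)$, which has weight $2$, so the map is also injective, and $H^1(X)\cong H^1(A)\cong H^1(C)$. Hence $\mathrm{Alb}(X)$ is an abelian variety of dimension $g$, isogenous to $J(C)$. We also know $H^2(A,\QBbb)\cong H^2(C,\QBbb)$ is one-dimensional, pure of type $(1,1)$, so the cup product $\wedge^2H^1(A)\to H^2(A)$ has rank at most one. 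Moreover $H^3(A)=H^4(A)=0$, and $e(A)=2-2g\le 0$. We then argue that the image of the Albanese map $X\to\mathrm{Alb}(X)$ is a curve. If it were a surface, then for $g\ge2$ the holomorphic $2$-forms $\alpha\wedge\beta$ pulled back from $\mathrm{Alb}(X)$ would give nonzero classes of type $(2,0)$. Such classes restrict injectively to $W_2H^2(A)=\mathrm{im}\,H^2(X)$ modulo boundary classes, because boundary curve classes are of type $(1,1)$. This contradicts the purity of type $(1,1)$. For $g=1$ the Albanese variety is an elliptic curve, so its image is automatically a curve. Thus we obtain a fibration $f\colon X\to B$ with connected fibers (Stein factorize), where $B$ has genus $g$ and $H^1(B)\cong H^1(X)$. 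Under these isomorphisms $B\cong C$: both have isomorphic $H^1$ Hodge structures, and $C$ is determined up to isomorphism by this via Torelli, using the polarization coming from $H^2$. If needed, we replace $C$ by $B$, since the statement only needs some such curve.

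\textbf{Step 2: restrict to $A$ and identify the fibers.} Consider $f|_A\colon A\to C$. Since $A$ is affine it contains no complete curves, so every fiber of $f|_A$ is an affine curve. The map $f|_A$ is surjective: otherwise $A$ would map to an affine curve with $H^1$ of weight $2$, contradicting purity via the injection $H^1(C)\hookrightarrow H^1(A)$. Let $F$ be a general fiber. We use additivity of Euler characteristics over the fibration, $e(A)=e(C)e(F)+\sum(\text{corrections at singular fibers})$, together with the log Kodaira dimension of $A$. If $\bar\kappa(A)\ge 0$, then by the log Bogomolov--Miyaoka--Yau inequality ($e\ge 0$ for such surfaces, Kobayashi/Miyaoka) combined with $e(A)=2-2g\le 0$, we get strong restrictions. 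We expect these to rule out this case for $g\ge 2$, and to force $e=0$ with $\bar\kappa=0$ for $g=1$. The $g=1$ case then needs separate treatment via Fujita's classification, excluding $\Gm$-bundles over elliptic curves using $H^2$ and the weights. Hence $\bar\kappa(A)=-\infty$. By Miyanishi--Sugie/Fujita, $A$ then contains a cylinder $U\times\ABbb^1$, and the $\ABbb^1$-ruling extends to an $\ABbb^1$-fibration of $A$ over a curve. Since $C$ has positive genus, this ruling must be $f|_A$ up to isomorphism of the base: the $\ABbb^1$-lines map to points of $C$, because there are no nonconstant maps $\ABbb^1\to C$.

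\textbf{Step 3: no degenerate fibers.} We now have an $\ABbb^1$-fibration $A\to C$ over a complete base. Every fiber is a disjoint union of curves $m_i\ABbb^1$. A comparison of Euler characteristics, $e(A)=e(C)+\sum_p(e(F_p)-1)$, with each $e(F_p)\ge 1$, and $e(A)=e(C)$, forces each fiber to have a single irreducible component. Multiple fibers $m\ABbb^1$ with $m\ge2$ remain; we exclude them using $H^1$. A multiple fiber would make $\pi_1$, hence $H^1$ after passing to the orbifold base, differ from $H^1(C)$. More concretely, in the Leray spectral sequence, $R^0f_*\QBbb$ is constant and $R^1f_*\QBbb$ and $R^2f_*\QBbb$ vanish away from degenerate fibers. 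Reducedness would then follow from smoothness of $A$ and a local analysis of the fiber, in the style of Kambayashi--Miyanishi, via orbifold arguments. Once all fibers are reduced and isomorphic to $\ABbb^1$, the map is smooth and is an $\ABbb^1$-bundle, hence a torsor under a line bundle.

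\textbf{Main obstacle.} We expect the hardest parts to be excluding $\bar\kappa(A)\ge0$ (especially for $g=1$) and excluding multiple fibers $m\ABbb^1$. Neither is detected by rational cohomology directly, so both require the finer Hodge-weight and Euler-characteristic arguments above.
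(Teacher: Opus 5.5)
Your Step 2 breaks at the claim that $f|_A$ is surjective. The weight argument only excludes $f(A)$ missing at least two points of $B$. If $f(A)=B\setminus\{c_1\}$, then $H^1(B\setminus\{c_1\},\QBbb)=H^1(B,\QBbb)$ is pure and nothing is contradicted.

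This case genuinely occurs. In $T=C\times\PBbb^1$ take an ample section $\sigma$ (the graph of a nonconstant map $C\to\PBbb^1$), and points $c_1\neq c_2$. Blow up two points of the fiber over $c_2$ not on $\sigma$, with exceptional curves $E_1,E_2$. Let $\Phi_1$ be the fiber over $c_1$, let $\Phi_2'$ be the strict transform of the fiber over $c_2$ (so $(\Phi_2')^2=-2$), and put $A=S\setminus(\sigma\cup\Phi_1\cup\Phi_2')$. By Nakai--Moishezon, $\alpha\sigma+\beta\Phi_1+\gamma\Phi_2'$ is ample for $\alpha>2\gamma>0$ and $\beta>0$, so $A$ is affine. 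Moreover $H^1(S)\to H^1(\partial)$ is an isomorphism, and $H^2(S)\to H^2(\partial)$ is onto with kernel spanned by $E_1-E_2$. The sequence of the pair $(S,\partial)$ and Poincar\'e duality then give $H^*(A,\QBbb)\cong H^*(C,\QBbb)$ as graded mixed Hodge structures. Yet $f(A)=C\setminus\{c_1\}$ and the fiber over $c_2$ is $\ABbb^1\sqcup\ABbb^1$. This is exactly what balances your Euler count: $e(A)=e(C\setminus\{c_1\})+1=e(C)$.

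Since maps $\ABbb^1\to C$ are constant, any $\ABbb^1$-fibration $\psi$ of $A$ over a complete curve $C''$ gives $f|_A=\bar f\circ\psi$ with $\bar f\colon C''\to C$. Here $\bar f$ cannot be surjective because $c_1\notin f(A)$, so $f|_A$ would be constant. Hence this $A$ has no fibration as in the statement, and graded Hodge data alone cannot close Step 2. The cup product would: if $f(A)\subset B\setminus\{c\}$, all products $H^1(A)\otimes H^1(A)\to H^2(A)$ factor through $H^2(B\setminus\{c\})=0$, whereas on $C$ this pairing is nonzero.

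Step 3 has the same defect with multiple fibers: rational cohomology does not see them, so no refinement of your argument can exclude them. In $C\times\PBbb^1$ blow up a point of a fiber $\Phi$ off $\sigma$, then the point $\Phi'\cap E_1$, then a general point of the new curve $E_2$. The fiber becomes $\Phi''+E_1'+2E_2'+2E_3$. Put $\partial=\sigma\cup\Phi''\cup E_2'\cup E_1'$, a chain whose rational members all have self-intersection $-2$, and $A=S\setminus\partial$. Then $10\sigma+6\Phi''+3E_2'+E_1'$ is ample, and the same computation gives $H^*(A,\QBbb)\cong H^*(C,\QBbb)$. This holds even as rings via $f^*$, since the fiber class is not in the span of the boundary classes. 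Here $f|_A$ is surjective with irreducible fibers, but its fiber over $c$ is $E_3\setminus\{\mathrm{pt}\}$ with multiplicity $2$.

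Any $\ABbb^1$-fibration $\psi$ of $A$ over a curve of positive genus again satisfies $f|_A=\bar f\circ\psi$. Irreducibility of the fibers of $f|_A$ makes $\bar f$ bijective, hence an isomorphism, so $A$ has no smooth $\ABbb^1$-fibration.

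So the statement is false as written, and neither your outline nor the paper's argument can be completed. Both examples satisfy Condition~J and contradict Theorem~\ref{minimal}. That proof wrongly asserts that all fibers of $p$ are reduced; in the second example $E_1'$ is a leaf of both $\partial$ and $F_c$ with $(E_1')^2=-2$. Its item (3) also overlooks the case $R_c=\emptyset$, which is the fiber $\Phi_1$ of the first example.

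Two smaller points:
\begin{enumerate}
\item Your Torelli step fails over $\QBbb$. Isomorphic rational Hodge structures only make the Jacobians isogenous, so at best you get a fibration over the Albanese image $B$, not over $C$. The paper silently makes the same replacement.
\item Your exclusion of $\bar\kappa(A)\geq 0$ for $g=1$ is announced, not proved.
\end{enumerate}
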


%It follows that various motivic invariants of $A$ and $C$ agree. 

The motivation for this result comes from the well-known Jouanolou trick (Lemma~1.5 of \cite{Jouanolou}).  Recall that for a quasiprojective variety $X$ over a field $k$, Jouanolou constructs an affine variety $A$ and a surjective morphism $A \to X$ which is a torsor for a vector bundle on $X$.  It follows that the motivic invariants of $A$ and $X$ agree.  In particular, when $k=\CBbb$, pullback gives an isomorphism of integral mixed Hodge structures
\begin{equation}\label{jouanolou:}
H^*(A,\ZBbb)\cong H^*(X,\ZBbb). 
 \end{equation}
Theorem~\ref{thetheorem} is a kind of converse, in low dimensions:  If $A$ and $X$ have the same rational mixed Hodge structure, $X$ is complete, and $\dim A= 2\dim X=2$, then $A$ is an affine space fibration over $X$.

%If $X$ is projective, then necessarily $\dim A \geq 2\dim X$ and in fact, the torsor in Jouanolou's trick can always be arranged to satisfy $\dim A=2\dim X$ (cf. the discussion in Section \ref{preliminaries}).  Thus Theorem \ref{thetheorem} is a sort of converse to Jouanolou's trick in dimension 2.    

The condition $\dim A = 2\dim X$ is certainly necessary:  If $X$ is a point and $\dim A=2$ one cannot conclude that $A$ is $\ABbb^2$, cf. C.~P.~Ramanujam's famous example \cite{Ramanujam} of a contractible affine surface which is not isomorphic to the affine plane.  
Note also that an  isomorphism of singular cohomology as in (\ref{basic}) is not enough to conclude the existence of a fibration $A \to C$.  For example, $A=\Gm\times\Gm$ has the same singular cohomology as an elliptic curve, but admits no map to an elliptic curve. Of course, there is no isomorphism of mixed Hodge structures as in (\ref{basic}). 

We do not know what can be said when $H^*(A,\QBbb) \cong H^*(C,\QBbb)$ as in (\ref{basic}) for a curve $C$ of genus zero.  One could also ask for analogues of the theorem over other fields.

Here is a brief sketch of the proof of Theorem \ref{thetheorem}: Take a smooth projective compactification $S$ of $A$ with S.N.C. boundary $\partial$. Arguments using mixed Hodge theory show that $\partial$ is a tree of curves with exactly one component $\partial'$ of positive genus, say $g$, and that the remaining components are all rational, and that $S$ has no global holomorphic $2$-forms. We show that the Albanese image of $S$ is a smooth curve $C$ of genus $g$ and the Albanese morphism  restricts to an isomorphism of $\partial'$ with $C$.  We then prove that the general fiber of the Albanese map is $\PBbb^1$. Passing to the minimal model of $S$, which is necessarily a projective bundle over $C$, we show that $\partial'$ is a section of the projective bundle and the remaining components of $\partial$ are precisely the $(-1)$-curves contracted in the minimal model. Hence $A$ admits an $\ABbb^1$-fibration over $C$.

Beyond surfaces, the analogous question for affine varieties with the same invariants as an abelian variety also appears to be interesting:

\begin{question}
Let $V$ be a smooth affine variety of dimension $2n$ ($n>1$) over $\CBbb$. Suppose that there is an isomorphism of mixed Hodge structures
\[
H^*(V,\ZBbb)\cong H^*(\Acal,\ZBbb),
\]
where $\Acal$ is an abelian variety of dimension $n$. Is there a geometric relation between $V$ and $\Acal$? 
\end{question}

Variants of the question are possible, e.g. one could further make assumptions on $\pi_1$, or on motivic invariants of $V$ and $\Acal$. The answers are not clear to us even when $n=2$.

\medskip

{\bf Acknowledgements.}
I am deeply grateful to Prof. Arvind Nair for suggesting the use of the decomposition theorem in the proof of Theorem \ref{mainthm}, for many helpful discussions, and for his constant support. I thank Prof. Najmuddin Fakhruddin for his encouragement and valuable comments on this work. I am also grateful to Mr. Aniket Chakraborty for introducing me to Jouanolou's trick, and to Mr. Swapnajit Das and Mr. Manish Kumar for several helpful discussions.

\medskip

\section{Preliminaries}
\label{preliminaries}
The following notation and definitions are used throughout this article. 

{\bf Notation.} For a variety $X/\CBbb$ and a $\ZBbb$-module $R$, we denote by $H^*(X,R)$ (respectively, $H^*_c(X,R)$) the singular cohomology (respectively, the singular cohomology with compact supports) of the associated complex analytic space with coefficients in $R$.
\begin{definition}[Dual graph of a curve]\label{dualgraph}
Let $X$ be a proper one-dimensional scheme of finite type  over a field $k$ with at most nodal singularities. The {\bf dual graph} of $X$ is the one-dimensional CW complex defined as follows:  The vertices correspond to the irreducible components of $X$ and for each node there is an edge joining the vertices corresponding to the two irreducible components which share the node. Clearly $X$ and $X^{\mathrm{red}}$ have the same dual graph. 
    
\end{definition}
 A subgraph is a subcomplex of the original graph and it is called \textbf{induced} if every edge between vertices of the subgraph in the original graph appears in the subgraph. 
 \begin{definition}
     Let $D$ be a $S.N.C.$ curve in a smooth surface $S$. An irreducible component of $D$ is called a \textbf{leaf} if it intersects the rest of $D$ at a single point. An irreducible component is called a \textbf{bridge} if intersects the rest of $D$ at $2$ points.
 \end{definition}
Now we briefly recall Jouanolou's construction. One first constructs a vector bundle torsor $V_n$ over $\PBbb^n$ which is an affine variety.   It is clear from the explicit construction of the torsor given in \cite{Jouanolou} that the fibers of this torsor are isomorphic to $\ABbb^n$. Thus the total space of the torsor has twice the dimension of the base. Next, for any projective variety of dimension $n$, one can take a finite map to $\PBbb^n$ and pull back $V_n$ along this map. Since finite maps are affine, the resulting pullback is affine and has dimension $2n$. For a quasi-projective variety, one can take a projective compactification whose boundary is an effective Cartier divisor. Pulling back the torsor over the compactification to the quasi-projective variety then gives the required construction. Thus, in all cases, one can arrange a torsor in which the total space has twice the dimension of the base.

\section{Smooth compactification and boundary}

 We start by introducing the main objects of our study.
\begin{definition}[Condition $J$]\label{J}
A pair $A \subset S$ consisting of a smooth affine surface over $\CBbb$ and a smooth projective compactification of it is said to satisfy condition $J$ if 
\begin{enumerate}
    \item $H^*(A,\QBbb)$ is the rational mixed Hodge structure of a smooth projective curve of genus $g$ and 
    \item $\partial:=S-A$ is $S.N.C.$ and contains no $-1$-curves on $S$ which are leaves or bridges in $\partial$.
\end{enumerate}
\end{definition}

Note that given an affine surface $A$ there always exists a smooth projective compactification $S$ of $A$, satisfying point (2) of the above condition. 

In the next couple of lemmas, we describe the structure of $\partial$.

\begin{lemma}\label{lem1}
Under condition $J$ in Definition \ref{J}, the dual graph of $\partial$ is a connected tree. If the $g_i$'s denote the arithmetic genera of the irreducible components of $\partial$, then
\[
\sum g_i=g.
\]

\begin{proof}
First note that, by condition $J$, $H^i(A,\QBbb)$ is pure of weight $i$. Thus, by Poincar\'e duality, $H^i_c(A,\QBbb)$ is pure of weight $i$, and
$
H^i_c(A,\QBbb)=0
$
for $i\leq1$.
Now consider the long exact sequence of compactly supported cohomology for the pair $(S,\partial)$:
\begin{equation}
0\to H^0(S,\QBbb)\to H^0(\partial,\QBbb)\xrightarrow{f} H^1_{c}(A,\QBbb)=0\to H^1(S,\QBbb)\to H^1(\partial,\QBbb)\to...
\end{equation}

\begin{equation}
H^1(\partial,\QBbb)\xrightarrow{g} H^2_{c}(A,\QBbb)\to H^2(S,\QBbb)\to H^2(\partial,\QBbb)\xrightarrow{h} H^3_c(A,\QBbb)\to H^3(S,\QBbb)\to H^3(\partial,\QBbb)=0
\end{equation}

\begin{equation}
0\to H^4_c(A,\QBbb)\to H^4(S,\QBbb)\to H^4(\partial,\QBbb)=0.
\end{equation}

Since $\partial$ is a reduced $S.N.C.$ divisor, $H^1(\partial,\QBbb)$ has weights at most $1$, while $H^2(\partial,\QBbb)$ is pure of weight $2$. As the above sequences are exact sequences of $M.H.S.$, the maps $f$, $g$, and $h$ must all vanish.

The first consequence is that
$
H^0(\partial,\QBbb)=H^0(S,\QBbb)=\QBbb
$
so $\partial$ is connected. 
Moreover, since $S$ is smooth and projective, $H^1(S,\QBbb)$ is pure of weight $1$. As
$
H^1(S,\QBbb)\twoheadrightarrow H^1(\partial,\QBbb),
$
it follows that $H^1(\partial,\QBbb)$ is also pure of weight $1$.
We also have
$
H^3_c(A,\QBbb)\cong H^3(S,\QBbb),
$
and hence, by Poincar\'e duality,
\[
H^1(S,\QBbb)\cong H^1(A,\QBbb)\cong \QBbb^{\oplus 2g}.
\]

Another interesting consequence of the above exact sequences is that $H^2(S,\QBbb)$ is an extension of $H^2_c(A,\QBbb)$ and $H^2(\partial,\QBbb)$, both of which are pure of weight $2$ and of Hodge type $(1,1)$. Consequently,
$
H^2(S,\CBbb)
$
is spanned by $(1,1)$-classes. Equivalently,
\[
H^2(S,\Ocal_S)=H^0(S,\Omega^2_S)=0.
\]
As $\partial$ is an $S.N.C.$ divisor, we have
\[
gr^1_W\bigl(H^1(\partial,\QBbb)\bigr)\cong
\QBbb^{\oplus 2\sum g_i},
\qquad
gr^0_W\bigl(H^1(\partial,\QBbb)\bigr)
\cong
\QBbb^{\oplus (N-l+1)},
\]
where the $g_i$'s are the arithmetic genera of the irreducible components of $\partial$, $N$ is the number of nodes, and $l$ is the number of irreducible components of $\partial$.

The previous computation shows that $H^1(\partial,\QBbb)$ is pure of weight $1$. Hence,
$
N-l+1=0,
$
which immediately implies that the dual graph of $\partial$ is a tree. Moreover, since
\[
\dim H^1(\partial,\QBbb)=\dim H^1(S,\QBbb),
\]
we obtain
$
\sum_{i=1}^{l}g_i=g.
$
\end{proof}
\end{lemma}

\begin{remark}
An immediate consequence of the above lemma in the case $g=0$ is that $g_i=0$ for all $i$. Thus, $\partial$ is a tree of $\PBbb^1$'s.
\end{remark}

Now we focus on the case $g\geq1$. Consider the Albanese morphism
\[
\alpha:S\to Alb(S).
\]
By the universal property of the Albanese morphism, $Alb(S)$ is generated by the image $\alpha(S)$. Moreover,
$
\dim Alb(S)=\dim H^0(S,\Omega^1_S)=g\geq1,
$
so $\alpha(S)$ is positive-dimensional.

The fact that $S$ has no global $2$-forms implies that $\alpha(S)$ is one-dimensional. Indeed, suppose that
$
\dim\alpha(S)=2.
$
Then $\alpha$ is generically finite onto its image. Since we are working in characteristic $0$, $\alpha$ is generically étale over its image. Choose a smooth point
$
p\in\alpha(S)
$
such that $\alpha$ is étale over a neighbourhood of $p$ in $\alpha(S)$.
Take two linearly independent local $1$-forms $\omega_1,\omega_2$ on $\alpha(S)$ near $p$. Then
$
\omega_1\wedge\omega_2
$
is a non-zero local $2$-form at $p$. Since
\[
\alpha(S)\hookrightarrow Alb(S)
\]
is an immersion, these local $1$-forms lift to local $1$-forms
$
\omega_1',\omega_2'
$
on $Alb(S)$ near $p$. As the cotangent bundle of $Alb(S)$ is trivial, the local 2-form
$
\omega_1'\wedge\omega_2'
$
extends to a non-zero global $2$-form on $Alb(S)$.

Pulling this form back to $S$ via $\alpha$ gives a global $2$-form on $S$. This pull-back is non-zero because, over a preimage of $p$, it coincides with the pull-back of the non-zero local form
$
\omega_1\wedge\omega_2
$
and $\alpha$ is \'etale near $p$. This contradicts the fact that
$
H^0(S,\Omega_S^2)=0.
$

Let $C$ be the image of $S$ in $\operatorname{Alb}(S)$. We claim that $C$ is smooth and that $p:S\to C$ has connected fibers. Taking the Stein factorization of $p$, we have
\[
\begin{tikzcd}
	S & {\hat{C}} & C & {\operatorname{Alb}(S)}
	\arrow["h"', from=1-1, to=1-2]
	\arrow["p"', bend right=40, from=1-1, to=1-3]
	\arrow["\alpha", bend left=40, from=1-1, to=1-4]
	\arrow["f"', from=1-2, to=1-3]
	\arrow["i"', hook, from=1-3, to=1-4].
\end{tikzcd}
\]
where $\hat{C}$ is normal (as $S$ is normal), $h$ has connected fibers, and $f$ is finite. The map $h$ induces a surjection $\alpha_h:\operatorname{Alb}(S)\to\operatorname{Alb}(\hat{C})$, while $i\circ f$ induces $\alpha_f:\operatorname{Alb}(\hat{C})\to\operatorname{Alb}(S)$. Note that $\alpha=(\alpha_f\circ\alpha_h)\circ\alpha$, then by the universal property of the Albanese morphism, $\alpha_f\circ\alpha_h$ is an isomorphism. Surjectivity of $\alpha_h$ then implies that $\alpha_f$ is an isomorphism. Since the Albanese morphism of the smooth projective curve $\hat{C}$ is an embedding, it follows that $f$ is an isomorphism. Thus $C\cong\hat{C}$ is smooth of genus $g=\dim \text{Alb}(S)$, and $p$ has connected fibers. 
\begin{remark}
    The above argument uses the fact that base field is characteristic 0. In the proof of the fact that the image of the Albanese morphism is a curve we use that in case of characteristic zero a finite map is generically \'etale.   
\end{remark}

We obtain the following corollary.

\begin{corollary}\label{fundamentalgroup}
In the case $g\geq1$,
\begin{itemize}
\item $\partial$ has a unique irreducible component $\partial'$ of genus $g$, and all the other irreducible components are $\PBbb^1$'s. Moreover, the restriction of the map $p$ to $\partial'$ induces an isomorphism
\[
\partial'\xrightarrow{\sim} C.
\]

\item The maps 
$
\partial'\hookrightarrow S
$
and 
$
p:S\to C
$
induce isomorphisms between the corresponding fundamental groups.
\end{itemize}

\begin{proof}
First note that the restriction
\[
p|_{\partial}:\partial\to C
\]
is surjective. Otherwise, since $\partial$ is connected, its image in $C$ would be a point. Consequently, $A$ would contain a fiber of the map $p$, which is impossible because $p$ is proper while $A$ is affine.

Hence at least one irreducible component of $\partial$ surjects onto $C$. Therefore, at least one of the $g_i$ satisfies
$
g_i\geq g.
$
On the other hand,
\[
\sum_{i=1}^{l}g_i=g,
\]
so the only possibility is that exactly one irreducible component has genus $g$, while all the remaining components are $\PBbb^1$'s. Moreover, this unique component of genus $g$ surjects onto $C$. We denote it by $\partial'$.

By \cite[Th. 2]{Goodman}, $\partial$ supports an ample divisor. Hence, by the Lefschetz hyperplane theorem (cf. \cite{Morse}),
$
\pi_1(\partial)\rightarrow\pi_1(S)
$
is surjective.
Since $\partial$ is an $S.N.C.$ divisor and every irreducible component other than $\partial'$ is isomorphic to $\PBbb^1$, the inclusion
$
\partial'\hookrightarrow\partial
$
induces an isomorphism
$
\pi_1(\partial')\xrightarrow{\sim}\pi_1(\partial).
$
Consequently,
$
\pi_1(\partial')\longrightarrow\pi_1(S)
$
is also surjective.
Similarly, the Lefschetz hyperplane theorem implies that
$
H_1(\partial',\ZBbb)\longrightarrow H_1(S,\ZBbb)
$
is surjective. Since $H_1(\partial',\ZBbb)$ is free of rank $2g$, while
\[
H^1(S,\QBbb)\cong H_1(S,\ZBbb)\otimes\QBbb
\]
also has rank $2g$, $H_1(\partial',\ZBbb)\to H_1(S,\ZBbb)$ is an isomorphism. In particular, $H_1(S,\ZBbb)$ is free.

It follows that the composite morphism
\[
\partial'\hookrightarrow S\xrightarrow{\alpha}Alb(S)
\]
is precisely the Albanese morphism of $\partial'$. Since the map
$
\partial'\longrightarrow Alb(S)
$
is an embedding, the induced morphism
$
p|_{\partial'}:\partial'\longrightarrow C
$
is an isomorphism. This proves the first assertion.

The second assertion follows immediately from the commutative diagram of fundamental groups
\[
\begin{tikzcd}
	{\pi_1(\partial')} && {\pi_1(S)} \\
	& {\pi_1(C)}
	\arrow[two heads, from=1-1, to=1-3]
	\arrow["\cong"', from=1-1, to=2-2]
	\arrow["{p_*}", from=1-3, to=2-2]
\end{tikzcd}
\]
which completes the proof.
\end{proof}
\end{corollary}

\begin{remark}
Although we do not obtain a morphism to a curve in the case $g=0$, all the irreducible components of $\partial$ are $\PBbb^1$'s. Hence, by the Lefschetz hyperplane theorem, $\pi_1(S)$ is trivial.
\end{remark}

\section{Study of the fibration and the main theorem}

In this section, we study the fibration induced by the Albanese morphism
\[
S\to C.
\]
 Our goal is to prove that the general fiber of this morphism is $\PBbb^1$. Since $S$ is a smooth surface, the general fiber is smooth. Thus, it is enough to determine the genus of the general fiber, which is captured by its first singular cohomology (with $\QBbb$-coefficients).

Before doing so, we prove a general lemma concerning a fibration over a curve with a unique singular fiber.

\begin{theorem}\label{mainthm}
Let $X$ be a smooth variety of dimension $d$, and let
\[
f:X\to C
\]
be a flat projective morphism with connected fibers onto a smooth curve $C$. Suppose that $f$ has exactly one singular fiber, lying over a closed point $c\in C$. If $F$ denotes a general fiber and $F_c$ denotes the fiber over $c$, then
\[
\dim H^1(F,\QBbb)\geq\dim H^1(F_c,\QBbb).
\]

\begin{proof}
Let
\[
j:C^*=C\setminus\{c\}\hookrightarrow C
\]
denote the open immersion.

Consider the shifted constant sheaf $
\underline{\QBbb}_{X}[d]
$ on $X$, which is the perverse sheaf underlying a pure Hodge module of weight $d$ (in the sense of Saito cf. \cite{Saito1},\cite{Saito2},\cite{BBD}).  We denote perverse cohomology sheaves of a complex $K^\bullet$ by
\[
{}^p\Hcal^i(Rf_*(K^\bullet)).
\]

Since $f$ is projective, the decomposition theorem (cf. \cite[(4.5.4)]{Saito2} or \cite[Th.~6.2.5]{BBD}) implies that
$
Rf_*(\underline{\QBbb}_X[d])
$
is noncanonically isomorphic to the shifted direct sum of its perverse cohomology sheaves. Moreover,
$
{}^p\Hcal^i(Rf_*(\underline{\QBbb}_X[d]))
$
is pure, and each perverse cohomology sheaf is canonically isomorphic to a direct sum of intermediate extensions.

Since $f$ is smooth over $C^*$, the decomposition theorem gives
\begin{align}
Rf_*(\underline{\QBbb}_X[d])|_{C^*}
&=
\bigoplus_{k=0}^{2(d-1)}
\left(
R^kf_*(\underline{\QBbb}_X)|_{C^*}[1]
\right)
[-k-1+d]
\\
&=
\bigoplus_{i=-(d-1)}^{(d-1)}
\left(
R^{i+d-1}f_*(\underline{\QBbb}_X)|_{C^*}[1]
\right)
[-i].
\end{align}

Let
\[
L^i:=R^{i+d-1}f_*(\underline{\QBbb}_X)|_{C^*},
\qquad -(d-1)\leq i\leq(d-1).
\]
Then
$
j^*\left(
{}^p\Hcal^i(Rf_*(\underline{\QBbb}_X[d]))
\right)
=
L^i[1].
$ Also note that $L^i$'s are local systems on $C^*$.

Consequently,
\[
{}^p\Hcal^i(Rf_*(\underline{\QBbb}_X[d]))
=
j_{!*}(L^i[1])\oplus S_i,
\qquad
-(d-1)\leq i\leq(d-1),
\]
where $j_{!*}$ denotes the intermediate extension and $S_i$ is a skyscraper sheaf supported at $c$.

Since $C$ is a smooth curve, we have
$
j_{!*}=j_*.
$
Therefore,
\[
{}^p\Hcal^i(Rf_*(\underline{\QBbb}_X[d]))
=
j_*L^i[1]\oplus S_i,
\qquad
-(d-1)\leq i\leq(d-1).
\]

All the remaining perverse cohomology sheaves are skyscraper sheaves supported at $c$. We continue to denote them by
\[
{}^p\Hcal^i(Rf_*(\underline{\QBbb}_X[d]))=S_i,
\qquad
i>(d-1)\ \text{or}\ i<-(d-1).
\]

By the Hard Lefschetz theorem for perverse cohomology sheaves,
\[
{}^p\Hcal^{-i}(Rf_*(\underline{\QBbb}_X[d]))
\cong
{}^p\Hcal^i(Rf_*(\underline{\QBbb}_X[d])).
\]

Hence
$
S_{-i}\cong S_i,\; L^{-i}\cong L^i
$
for all $i$. 
Taking the stalk of
$
Rf_*(\underline{\QBbb}_X)
$
at $c$, we obtain
\[
Rf_*(\underline{\QBbb}_X)|_c
=
\bigoplus_{i=-(d-1)}^{d-1} j_*L^{i}|_c[1-i-d]
\oplus
\bigoplus_{t\in\ZBbb}S_t[-t-d].
\]

Since $f$ is flat, $F_c$ has no cohomology in degrees $\geq2 d-1$. We conclude that
$
S_{t}=0
$
for all \[t+d\geq 2d-1\implies t\geq d-1.\] By Hard Lefschetz,
$
S_{t}=0
$
for all $t\leq-(d-1)$ as well.

Therefore,
\[
H^1(F_c,\QBbb)
= H^1(Rf_*(\underline{\QBbb}_X)|_c)=
j_*L^{-d+2}|_c
=
j_*\!\left(
R^1f_*(\underline{\QBbb}_X)|_{C^*}
\right)\Big|_c.
\]

Now
$
R^1f_*(\underline{\QBbb}_X)|_{C^*}
$
is a local system of rank
$
\dim H^1(F,\QBbb).
$
Hence the stalk of
$
j_*\!\left(
R^1f_*(\underline{\QBbb}_X)|_{C^*}
\right)
$
at $c$ has rank at most
$
\dim H^1(F,\QBbb),
$
which proves the theorem.
\end{proof}
\end{theorem}

\begin{remark}\label{rem 2}
\begin{itemize}
\item[(1)] The above computation implies that if
\[
\dim H^1(F_c,\QBbb)=\dim H^1(F,\QBbb),
\]
then the local monodromy around $c$ for the local system
$
R^1f_*(\underline{\QBbb}_X)|_{C^*}
$
is trivial. In other words,
$
j_*\bigl(R^1f_*(\underline{\QBbb}_X)|_{C^*}\bigr)
$
is a local system at $c$.

\item[(2)] Let's focus on the case $d=2$. Then note that $L^{-1}$ is the constant local system, since its stalks compute the $H^0$ of the fibers. Since $L^{-1}\cong L^1$, we obtain
\[
j_*(L^{-1})=j_*(L^1)=\underline{\QBbb}_C.
\]
Moreover, we have the following description of $Rf_*(\underline{\QBbb}_X)$:
\[
Rf_*(\underline{\QBbb}_X)
=
(\underline{\QBbb}_C\oplus S_0)[-2]
\oplus
j_*(L^0)[-1]
\oplus
\underline{\QBbb}_C[0].
\]
\end{itemize}

Taking the stalk at $c$, we see that the rank of $S_0$ is $h-1$, where $h$ is the number of irreducible components of $F_c$. Moreover, taking cohomology sheaves, we obtain
\[
R^0f_*(\underline{\QBbb}_X)=\underline{\QBbb}_C,\qquad
R^1f_*(\underline{\QBbb}_X)=j_*\bigl(R^1f_*(\underline{\QBbb}_X)|_{C^*}\bigr),\qquad
R^2f_*(\underline{\QBbb}_X)=\underline{\QBbb}_C\oplus S_0.
\]
Combining this with part (1), we conclude that whenever
\[
\dim H^1(F_c,\QBbb)=\dim H^1(F,\QBbb),
\]
the sheaf $R^1f_*(\underline{\QBbb}_X)$ is locally constant at $c$.
\end{remark}

We are now ready to prove that the general fiber of the fibration
$
S\to C
$
is $\PBbb^1$.

\begin{proposition}
Let $(S,A)$ be a pair satisfying Condition $J$ in definition \ref{J}. Then the general fiber of the morphism
$
p:S\to C
$
is $\PBbb^1$.

\begin{proof}
Under Condition $J$, we know that
\[
H^*(A,\QBbb)\cong H^*(C,\QBbb).
\]
Moreover, if $\partial=S\setminus A$, then $\partial$ is a connected $S.N.C.$ divisor in $S$ whose dual graph is a tree, with one irreducible component isomorphic to $C$ and all the remaining components isomorphic to $\PBbb^1$.

We first compute the cohomology of $\partial$. To this end, we note the following elementary fact. Let $T$ be a tree of curves, and suppose that we attach a leaf $L\cong\PBbb^1$ to it. Then
\[
H^0(T\cup L,\QBbb)=\QBbb,\qquad
H^1(T\cup L,\QBbb)=H^1(T,\QBbb),
\]
and
\[
H^2(T\cup L,\QBbb)=H^2(T,\QBbb)\oplus\QBbb.
\]
\iffalse
Indeed, write
\[
T\cup L=(T-(T\cap L))\sqcup(L-(T\cap L))\sqcup\{p\},
\]
where $p=T\cap L$. We have
\[
H^0_c(T-p)=0,\qquad
H^1_c(T-p)=H^1(T,\QBbb),\qquad
H^2_c(T-p)=H^2(T,\QBbb),
\]
and similarly
\[
H^0_c(L-p)=H^1_c(L-p)=0,\qquad
H^2_c(L-p)=\QBbb.
\]
The desired conclusion follows immediately from the long exact sequences for compactly supported cohomology and ordinary cohomology.
\fi
Applying this observation inductively, we obtain
$
H^0(\partial,\QBbb)=\QBbb,
H^1(\partial,\QBbb)=H^1(C,\QBbb),
$
and
$
H^2(\partial,\QBbb)=\QBbb^{\oplus(r+1)},
$
where $r$ denotes the number of $\PBbb^1$-components of $\partial$.

Now consider the open-closed decomposition of $S$ into $A$ and $\partial$. We have
\[
\chi(S)=\chi(H^*_c(A,\QBbb))+\chi(\partial).
\]
By Poincar\'e duality,
\[
\chi(H^*_c(A,\QBbb))=\chi(A)=\chi(C),
\]
while the above computation gives
$
\chi(\partial)=\chi(C)+r.
$
Therefore,
\begin{equation}\label{eq1}
\chi(S)=2\chi(C)+r.
\end{equation}

Next, we compute $\chi(S)$ in a different way. Let $C^*\subset C$ be the maximal non-empty open subset over which
\[
p|_{p^{-1}(C^*)}:U:=p^{-1}(C^*)\longrightarrow C^*
\]
is smooth. By Ehresmann's theorem, all the fibers over $C^*$ are diffeomorphic. Let $F$ denote the fiber over a point $x\in C^*$. Since we are working over the complex numbers,
\[
\chi(U)=\chi(F)\chi(C^*).
\]
Hence,
\[
\chi(S)=\chi(F)\chi(C^*)+\sum_{c\in\Delta}\chi(F_c),
\]
where
\[
\Delta=C\setminus C^*,\qquad F_c=p^{-1}(c).
\]

Adding and subtracting $\chi(F)$ for each $c\in\Delta$, we obtain
\begin{align}\label{eq3}
\chi(S)
&=\chi(F)\left(\chi(C^*)+\sum_{c\in\Delta}1\right)
+\sum_{c\in\Delta}\bigl(\chi(F_c)-\chi(F)\bigr)\\
&=\chi(F)\chi(C)+\sum_{c\in\Delta}(\delta_c+r_c-1),
\end{align}
where
\[
\delta_c=\dim H^1(F,\QBbb)-\dim H^1(F_c,\QBbb),
\]
and $r_c$ denotes the number of irreducible components of $F_c$.

Note that if one of the $\PBbb^1$'s appearing as an irreducible component of $\partial$ is itself a smooth fiber, then the general fiber is also $\PBbb^1$. Thus, we may assume that every such $\PBbb^1$ is contained in a singular fiber.

Let
\[
\Delta'=\{c\in\Delta:\;p(L\cong\PBbb^1)=c,\;L\subset\partial\},
\]
and for each $c\in\Delta'$, let $r'_c$ denote the number of irreducible $\PBbb^1$-components of $\partial$ contained in $p^{-1}(c)$. Clearly,
$
r'_c\leq r_c.
$
Since the genus of $C$ is positive, we have
\[
\sum_{c\in\Delta'}r'_c=r.
\]
Moreover, let
\[
\Delta''=\{c\in\Delta':r_c=r'_c\}.
\]
Combining (\ref{eq1}) and (\ref{eq3}), we obtain
\begin{align*}
2\chi(C)+r
&=\chi(F)\chi(C)+\sum_{c\in\Delta}(\delta_c+r_c-1)\\
\Longrightarrow\qquad
\sum_{c\in\Delta'}r'_c
&=\chi(C)(\chi(F)-2)
+\sum_{c\in\Delta'}(\delta_c+r_c-1)
+\sum_{c\in(\Delta-\Delta')}(\delta_c+r_c-1)\\
\Longrightarrow\qquad
0
&=4g_F(g_C-1)
+\sum_{c\in\Delta''}(\delta_c-1)
+\sum_{c\in(\Delta'-\Delta'')}(\delta_c+r_c-r'_c-1)
+\sum_{c\in(\Delta-\Delta')}(\delta_c+r_c-1).
\end{align*}

Now Theorem~\ref{mainthm} implies that $\delta_c\geq0$ for every $c\in\Delta$. Moreover,
\[
4g_F(g_C-1)\geq0,\quad r_c-r'_c-1\geq0
\]
for every $c\in\Delta'-\Delta''$.

Furthermore, if $c\in\Delta''$, then the reduced fiber $F_c^{\mathrm{red}}$ is a tree of $\PBbb^1$'s. Hence
$
\dim H^1(F_c,\QBbb)=0.
$
Therefore, if $g_F\neq0$, then
$
\delta_c\geq2
$
for every $c\in\Delta''$. Consequently,
\[
\sum_{c\in\Delta''}(\delta_c-1)\geq\sum_{c\in\Delta''}1.
\]

We now distinguish two cases.

\begin{itemize}
\item[$g_C\geq2$:]
Assume that $g_F\neq0$. Then
\[
4g_F(g_C-1)>0,
\]
while every other summand on the right-hand side of the above equation is non-negative. Hence the entire right-hand side is strictly positive, whereas the left-hand side is $0$, a contradiction. Therefore,
\[
g_F=0.
\]

\item[$g_C=1$:]
In this case, the above equality becomes
\begin{equation}
\sum_{c\in\Delta''}(2g_F-1)
+\sum_{c\in(\Delta'-\Delta'')}(\delta_c+r_c-r'_c-1)
+\sum_{c\in(\Delta-\Delta')}(\delta_c+r_c-1)
=0.
\end{equation}

Since $g_F>0$, this immediately implies that
$
\Delta''=\varnothing,\;
\delta_c=0
$
for every $c\in\Delta$. Hence
\[
\dim H^1(F,\QBbb)=\dim H^1(F_c,\QBbb)
\]
for every $c\in\Delta$. By Remark~\ref{rem 2}, it follows that
$
R^1p_*(\underline{\QBbb}_S)
$
is a local system.

Now consider the base change of $p$ by the universal covering map
\[
u:\CBbb\longrightarrow C.
\]
We obtain the following Cartesian diagram:
\[
\begin{tikzcd}
	{S'} & S \\
	{\mathbb{C}} & C
	\arrow["{u'}", from=1-1, to=1-2]
	\arrow["{p'}"', from=1-1, to=2-1]
	\arrow["p", from=1-2, to=2-2]
	\arrow["u"', from=2-1, to=2-2].
\end{tikzcd}
\]

Since $u$ is a covering map,
$
R^1p'_*(\underline{\QBbb}_{S'})
$
is a local system of the same rank as
$
R^1p_*(\underline{\QBbb}_S),
$
namely $2g_F$. Since $\CBbb$ is simply connected, this local system is trivial. Therefore, the Leray spectral sequence for $p'$ yields
$
\dim H^1(S',\QBbb)=2g_F>0.
$
On the other hand, the above fiber diagram induces the following commutative diagram of fundamental groups:
\[
\begin{tikzcd}
	{\pi_1(S')} & {\pi_1(S)} \\
	{(e)=\pi_1(\mathbb{C})} & {\pi_1(C)}
	\arrow["{u'_*}", hook, from=1-1, to=1-2]
	\arrow["{p'_*}"', from=1-1, to=2-1]
	\arrow["{p_*}", from=1-2, to=2-2]
	\arrow["u_*"', from=2-1, to=2-2].
\end{tikzcd}
\]

The map $u'_*$ is injective because $u'$ is a covering map (being the base change of $u$). Moreover, by Corollary~\ref{fundamentalgroup}, the map
$
p_*:\pi_1(S)\longrightarrow\pi_1(C)
$
is an isomorphism. Hence $\pi_1(S')$ is trivial. By the universal coefficient theorem,
\[
H^1(S',\QBbb)=0,
\]
which contradicts the equality
$
\dim H^1(S',\QBbb)=2g_F.
$
Therefore,
$
g_F=0.
$
\end{itemize}
\end{proof}

Thus, $S$ is birational to $\PBbb^1\times C$.
\end{proposition}
\begin{remark}
The proof of the above proposition is analytic in nature in the case $g=1$, as it uses the universal cover. We also note that the product formula for Euler characteristics used in the proof for a smooth proper fibration does not hold in general over fields of arbitrary characteristic (cf. \cite[expose X]{SGA5}).
\end{remark}

\begin{theorem}\label{minimal}
Let $S$ be a smooth projective surface birational to $\PBbb^1\times C$, where $C$ is a smooth projective curve of positive genus. Assume that there exists a morphism
\[
p:S\to C
\]
with connected fibers. Moreover, let $A$ be an affine open subset of $S$ satisfying Condition~J. Then $S$ is minimal, and $\partial$ is irreducible and isomorphic to $C$.

\begin{proof}
Let
\[
f:S\to T
\]
be the contraction to a minimal model. Then $T$ is a projective bundle over $C$, and we have the following commutative diagram:
\[
\begin{tikzcd}
	S && T \\
	& C
	\arrow["f", from=1-1, to=1-3]
	\arrow["p"', from=1-1, to=2-2]
	\arrow["q", from=1-3, to=2-2].
\end{tikzcd}
\]
Here $q$ is the projective bundle and $f$ is a composition of blow-ups at closed points.

Since the genus of $C$ is positive, every irreducible component of $\partial$ other than $\partial'$ maps to a point of $C$; equivalently, it is contained in a fiber over a closed point of $C$. From the above diagram, it follows that every fiber of $p$ is a reduced tree of $\PBbb^1$'s, and every irreducible component of a fiber is smooth. Moreover, for every $c\in C$, the curve $\partial'$ intersects exactly one irreducible component of $p^{-1}(c)$, and the intersection consists of a single point.

Let $D$ be a leaf or a bridge of $\partial$ isomorphic to $\PBbb^1$, and let $p(D)=c\in C$. We claim that $D$ cannot be a leaf of the fiber
$
p^{-1}(c)=:F_c.
$
Indeed, otherwise
$
D\cdot\overline{(F_c-D)}=1.
$
But $D$ being an irreducible component of the fiber $F_c$ gives us $D\cdot F_c=0$, thus
\[
D\cdot D=-D\cdot\overline{(F_c-D)}=-1.
\]
This means $D$ is a minus one, which contradicts condition $J$.
Notice that the same argument shows that every leaf of every fiber $F_c$ has self-intersection $-1$.

Next, observe that any connected subtree
$
\Theta\subset\overline{F_c-\partial}
$
must be irreducible.

Indeed, we first claim that
$
\Theta\cap\partial
$
consists of a single point. Otherwise,
$
\Theta\cap\overline{F_c-\Theta}
$
would contain at least two points, contradicting the fact that $F_c$ is a tree. Moreover, this intersection point is necessarily a smooth point of $\Theta$.
Now suppose that $\Theta$ has more than one irreducible component. Then one of its irreducible components does not meet $\partial$. Consequently, this component is contained entirely in $A$, contradicting the fact that $A$ is affine. This proves the claim.

We therefore obtain the following conclusions.

\begin{enumerate}\label{1}
\item If $s$ is a smooth point of $\partial$ lying on $\partial'$, and $c=p(s)$, then
\[
p^{-1}(c)\cong\PBbb^1.
\]

\item Suppose
$
s=\partial'\cap\partial'',
$
where $\partial''$ is a connected subtree of $\partial$ consisting entirely of $\PBbb^1$'s, and let $p(s)=c$. Then
\[
p^{-1}(c)=R_c\cup T_c,
\]
where $R_c$ is a finite collection of $\PBbb^1$'s and $T_c$ is the connected subtree of $\partial$ consisting of $\PBbb^1$'s which contains $\partial''$.

If $a,b\in R_c$, then
$
a\cap b=\phi,
$
and
$
a\cap T_c=\{q\},
$
where $q$ is a smooth point of $T_c$. Moreover, if $D$ is a leaf of $T_c$, then either it is a leaf or a bridge of $\partial$. Thus there exists some $a\in R_c$ such that
$
a\cap D\neq\phi.
$

\item If, for some $c$, the tree $T_c$ consists of a single $\PBbb^1$, then the cardinality of $R_c$ is at least $2$; otherwise, this $\PBbb^1$ would be a leaf of both $\partial$ and $F_c$.
If $T_c$ is not a single $\PBbb^1$, then it has at least two leaves, each of which must meet some element of $R_c$. Hence, in every case,
$
|R_c|\geq2.
$
\end{enumerate}

Now suppose that $\partial\neq\partial'$. Then
\[
p(\overline{\partial-\partial'})=\{c_1,\ldots,c_k\}.
\]
Let the corresponding fibers be denoted by
$
F_1,\ldots,F_k.
$
By the previous discussion, we may write
\[
F_i=R_i\cup T_i,\qquad i=1,\ldots,k.
\]

Let
$
C^*=C-\{c_1,\ldots,c_n\}.
$
By Remark~\ref{rem 2}, we have
\begin{align*}
R^0p_*(\underline{\QBbb}_{S})&=\underline{\QBbb}_{C},\\
R^1p_*(\underline{\QBbb}_{S})&=0,\\
R^2p_*(\underline{\QBbb}_{S})&=\underline{\QBbb}_{C}\oplus\bigoplus_{i=1}^{k}L_i,
\end{align*}
where $L_i$ is a skyscraper sheaf supported at $c_i$ of rank $n_i-1$, with $n_i$ denoting the number of irreducible components of $F_i$. The Leray spectral sequence therefore yields
\[
H^2(S,\QBbb)\cong
\QBbb^{\oplus\left(\sum_{i=1}^{k}(n_i-1)+2\right)}.
\]

Let $m_i$ denote the number of irreducible components of $T_i$. By the previous observation,
\[
n_i\geq m_i+2.
\]
Hence
\[
\dim H^2(S,\QBbb)
=
2+\sum_{i=1}^{k}(n_i-1)
\geq
2+\sum_{i=1}^{k}(m_i+1)
>
2+\sum_{i=1}^{k}m_i.
\]

On the other hand, lemma \ref{lem1} implies that
$
\dim H^2(S,\QBbb)
=
2+\sum_{i=1}^{k}m_i,
$
which is a contradiction. Therefore,
$
\partial=\partial'.
$
Finally, by part (1) of \ref{1}, every fiber of $p$ is isomorphic to $\PBbb^1$. Hence $S$ is a ruled surface over $C$. It follows that $S$ is itself minimal.
\end{proof}
\end{theorem}

Combining the above results, we obtain the following theorem.

\begin{theorem}[Theorem~\ref{thetheorem}]
Let $A$ be a smooth affine surface such that there is an isomorphism of $\QBbb$-mixed Hodge structures
\[
H^*(A,\QBbb)\cong H^*(C,\QBbb),
\]
where $C$ is a smooth projective curve of genus at least $1$. Then $A$ admits a smooth fibration over $C$ whose fibers are isomorphic to $\ABbb^1$.

\begin{proof}
Choose a smooth projective compactification $S$ of $A$ satisfying Condition $J$. By Theorem~\ref{minimal}, $S$ is the projectivization of a rank 2 vector bundle over $C$. Moreover, the boundary $S\setminus A$ is a section of this projective bundle (which is also ample by \cite[Th. 2]{Goodman}). The result now follows immediately.
\end{proof}
\end{theorem}
In the following proposition we give a classification of all smooth affine surfaces which has rational mixed Hodge structure of a smooth projective curve of positive genus. 
\begin{proposition}\label{classification}
  There is a one-to-one correspondence between smooth affine surfaces as in theorem \ref{thetheorem} and the following data:
  \begin{itemize}
\item a smooth projective curve $C$ of positive genus,
\item a line bundle $L$ on $C$ with $\deg(L)>0$, and
\item an extension class in $\PBbb(H^1(C,L^{\vee}))$.
\end{itemize}

\begin{proof} Let $A$ be such an affine surface. Then by theorem \ref{minimal} there is a projective completion of $A$ which is  projectivization of a rank two vector bundle, say $E$, over $C$. The boundary section corresponds to a line bundle quotient $E\twoheadrightarrow M$. It is then easy to see that the complement of this section is a torsor under the line bundle
\[
L:=M^{\vee}\otimes N,
\]
where $N$ is the kernel of the surjection $E\twoheadrightarrow M$. After twisting $E$ by a suitable line bundle, we may assume that $E$ fits into an extension
\[
0\to\Ocal_C\to E\to L\to0,
\]
and the corresponding extension class coincides with the class of the above torsor. Since $A$ is affine, this torsor cannot be trivial. By \cite[Th. 2.4]{Hartshorne}, it follows that $\deg(L)>0$, and moreover every non-trivial extension of $\Ocal_C$ by such a line bundle $L$ gives rise to an affine surface. Indeed by the aforementioned theorem one gets that such an extension is an ample vector bundle. As a result the section which corresponds to the relative $\Ocal(1)$ of the projectivization is ample so its complement is affine.

Furthermore, we have shown that the morphism $A\to C$ can be recovered intrinsically from $A$. The automorphism group of this $\ABbb^1$-fibration is naturally isomorphic to the additive group of the line bundle $L^{\vee}$, thereby recovering $L$ itself. Hence the result follows.
\end{proof}
\end{proposition}
\section{Appendix}

We give an alternative proof of Theorem~\ref{mainthm} in case of a curve fibration which does not use mixed Hodge modules.

\begin{theorem}
Let $X$ be a smooth surface and
\[
f:X\to C
\]
be a projective morphism with connected fibers to a smooth curve $C$. Suppose that $f$ has exactly one singular fiber, lying over a closed point $c\in C$. If $F$ is a general fiber and $F_c$ is the fiber over $c$, then
\[
\dim H^1(F,\QBbb)\geq\dim H^1(F_c^{\mathrm{red}},\QBbb).
\]

\begin{proof}
Let the singular fiber be denoted by $F_c$. Since the general fiber $F$ is smooth, we have
\[
2g_a(F)=2g(F)=\dim H^1(F,\QBbb).
\]
Moreover, since $f$ is flat,
$
g_a(F)=g_a(F_c),
$
and singular cohomology is insensitive to the non-reduced structure. Hence
\[
H^1(F_c,\QBbb)=H^1(F_c^{\mathrm{red}},\QBbb).
\]
Therefore, it is enough to prove that
\[
2g_a(F_c)\geq \dim H^1(F_c,\QBbb).
\]

First, note that it suffices to prove the result when $F_c^{\mathrm{red}}$ is an $S.N.C.$ divisor in $X$. Indeed, one may take an embedded resolution of singularities of the reduced special fiber to obtain an $S.N.C.$ divisor. During this process, one only blows up reduced closed points lying on $F_c$. Let
\[
q:X'\to X
\]
be the resulting composition of blow-ups. Then
$
Rq_*(\underline{\QBbb}_{X'})
\cong
\underline{\QBbb}_X\oplus L[-2],
$
where $L$ is a skyscraper sheaf supported at finitely many closed points of $F_c^{\mathrm{red}}$. Consequently,
\[
R(f\circ q)_*(\underline{\QBbb}_{X'})
=
Rf_*(Rq_*(\underline{\QBbb}_{X'}))
=
Rf_*(\underline{\QBbb}_X)\oplus L'[-2],
\]
where $L'$ is a skyscraper sheaf supported at $c$. Taking $\Hcal^1$ gives
\[
R^1(f\circ q)_*(\underline{\QBbb}_{X'})
=
R^1f_*(\underline{\QBbb}_X).
\]
Applying the proper base change theorem to both $f\circ q$ and $f$, we conclude that the first cohomology of the reduced special fiber is unchanged after the embedded resolution.

We now prove the result when the singular fiber is reduced and has simple normal crossings. The non-reduced case will be treated later.
Let $N$ denote the number of nodes of $F_c$, and let $r$ denote the number of irreducible components of $F_c$. Since every irreducible component is smooth, we have
\begin{equation}
g_a(F_c)=\sum_i g_a(F_i)+N-r+1.
\end{equation}

We next compute $\dim H^1(F_c,\QBbb)$. Let $F_i^\circ$ denote the complement in $F_i$ of the nodes lying on $F_i$, and let $N_i$ be the number of such nodes. Since every node lies on exactly two irreducible components,
\[
\sum_i N_i=2N,
\qquad
\sum_i1=r.
\]
Moreover,
$
H_c^1(F_i^\circ,\QBbb)
=
\QBbb^{\oplus(N_i-1)}
\oplus
H^1(F_i,\QBbb).
$
We also have the exact sequence
\[
0
\to
\QBbb^{\oplus(N-1)}
\to
\bigoplus_i H_c^1(F_i^\circ,\QBbb)
\to
H^1(F_c,\QBbb)
\to
0.
\]

Considering the dimensions of the terms in the above exact sequence, we obtain
\begin{align*}
    \dim H^1(F_c,\QBbb)
    &=\sum_i(\dim H^1(F_i,\QBbb)+N_i-1)-N+1\\
    &=\sum_i\dim H^1(F_i,\QBbb)+\sum_i N_i-\sum_i1-N+1\\
    &=\sum_i\dim H^1(F_i,\QBbb)+2N-r-N+1\\
    &=\sum_i\dim H^1(F_i,\QBbb)+N-r+1\\
    &=2\cdot\Bigl(\sum_i g_a(F_i)\Bigr)+N-r+1.
\end{align*}
The last equality follows from the fact that each $F_i$ is smooth.

Let $G$ be the dual graph of the nodal curve $F_c$ (cf. definition \ref{dualgraph}). The quantity $N-r+1$ is then equal to $E-V+1$, where $E$ is the number of edges of $G$ and $V$ is the number of vertices. Since $F_c$ is connected, the graph $G$ is also connected.

Observe that if $G$ consists of a single vertex and no edges, then
$
E-V+1=0.
$
Now suppose we add another vertex. To keep the graph connected, we must also add at least one edge. Consequently, the quantity $E-V+1$ remains non-negative. This proves that
\[
2\cdot g_a(F_c)\geq \dim H^1(F_c,\QBbb).
\]

Moreover, observe that if $G$ contains a cycle, then $E-V+1>0$, so the above inequality is strict. Hence equality holds if and only if the dual graph is a tree.

Now we treat the case when $F_c$ is not reduced. Since the problem is local around $c$, we may assume that $C$ is the spectrum of a discrete valuation ring with closed point $c$. We then take a ramified cover $g:(C',c)\to(C,c)$ such that, if $X'$ denotes the normalization of the base change of $X$ over $C'$, then the fiber of $X'$ over $c$ is reduced and has simple normal crossings. The arithmetic genus of the general fiber is unchanged under this procedure. Such a cover exists by the proof of the semistable reduction theorem for families of curves.

The new surface $X'$ has at worst rational singularities (more precisely, singularities of type $A_n$), and hence is Cohen--Macaulay. Therefore, by miracle flatness, the arithmetic genus of the fiber over $c$ is the same as that of the original special fiber. By the previous computation, the desired inequality holds for this new fibration.

Thus it only remains to show that the dimension of the first cohomology of the special fiber after the base change and normalization is greater than or equal to that before the base change and normalization.

\[
\begin{tikzcd}
    {X'} && \\
    & {Y} & S \\
    & {C'} & C
    \arrow["n", from=1-1, to=2-2]
    \arrow["q"', from=1-1, to=3-2]
    \arrow["{g'}", from=2-2, to=2-3]
    \arrow["{f'}", from=2-2, to=3-2]
    \arrow["f", from=2-3, to=3-3]
    \arrow["g"', from=3-2, to=3-3].
\end{tikzcd}
\]

Here $n$ is the normalization map. Let the fibers over $c$ under the maps $f$, $f'$, and $q$ be denoted by $F_c$, $F'_c$, and $F''_c$, respectively. Clearly,
\[
F_c^{\mathrm{red}}=(F'_c)^{\mathrm{red}}.
\]
We now recall the relevant part of the proof of the semistable reduction theorem (cf. \cite{curves}). Let $t$ be a uniformizing parameter of $C$ at $c$. Then, étale locally, a neighbourhood of a point $\alpha\in F_c^{\mathrm{red}}$ is of the following form:

\begin{itemize}
    \item If $\alpha$ is a smooth point of $F_c^{\mathrm{red}}$, then an étale neighbourhood of $\alpha$ is isomorphic to
    \[
    \Spec(R[x,y]/(x^a-t)),
    \]
    where $a$ is the multiplicity of the irreducible component containing $\alpha$.

    \item If $\alpha$ is a node of $F_c^{\mathrm{red}}$, then an étale neighbourhood of $\alpha$ is isomorphic to
    \[
    \Spec(R[x,y]/(x^by^c-t)),
    \]
    where $b$ and $c$ are the multiplicities of the two branches meeting at the node.
\end{itemize}

Here $R=\Ocal_{C,c}$. Let $L$ be the least common multiple of all the multiplicities. We take the ramified cover $(C',c)\to(C,c)$ such that the uniformizing parameter $t$ of $C$ at $c$ maps to $\tau^L$, where $\tau$ is a local uniformizing parameter of $C'$ at $c$. Let $R'=\Ocal_{C',c}$. Then the local étale charts become

\begin{itemize}
    \item At a smooth point of $F_c^{' ,\mathrm{red}}$,
    \[
    \Spec(R'[x,y]/(x^a-\tau^L)).
    \]

    \item At a node of $F_c^{' ,\mathrm{red}}$,
    \[
    \Spec(R'[x,y]/(x^by^c-\tau^L)).
    \]
\end{itemize}

Since we are working over $\CBbb$,
\[
R'[x,y]/(x^a-\tau^L)
=
R'[x,y]\Big/\left(\prod_{i=0}^{a-1}(x-\zeta^i\tau^{L/a})\right),
\]
where $\zeta$ is a primitive $a$-th root of unity. Thus, after normalization, every smooth point of $F_c^{' ,\mathrm{red}}$ has $a$ preimages, each of which is smooth.

Next, consider a node whose two branches have multiplicities $b$ and $c$, and let $\gcd(b,c)=d$. Then
\[
R'[x,y]/(x^by^c-\tau^L)
=
R'[x,y]\Big/\left(
\prod_{i=0}^{d-1}
\left(x^{b/d}y^{c/d}-\gamma^i\tau^{L/d}\right)
\right),
\]
where $\gamma$ is a primitive $d$-th root of unity. Thus the normalization $X'$ factors through $\hat X$ in such a way that the node has $d$ preimages in $\hat X$, each locally defined by the equation
\[
x^{b/d}y^{c/d}-\tau^{L/d}.
\]

If $b\neq c$, then the fiber over $c$ is still non-reduced at these points. Hence we must normalize the ring
\[
R'[x,y]/(x^by^c-\tau^{L'}),
\]
where $\gcd(b,c)=1$ and $L'$ is divisible by $bc$. Choose positive integers $g$ and $h$ satisfying
\[
gb-hc=1
\]
(after interchanging $b$ and $c$ if necessary). Then the normalization is given by
\begin{align*}
R'[x,y]/(x^by^c-\tau^{L'})
&\hookrightarrow
R'[u,v]/(uv-\tau^{L'/bc}),\\
x&\longmapsto u^c,\\
y&\longmapsto v^b,
\end{align*}
where
\[
u=\frac{\tau^{gL'/b}}{x^hy^g},
\qquad
v=\frac{x^hy^g}{\tau^{hL'/c}}.
\]

Now the fiber over $c$ is reduced, and $X'\to\hat X$ has a unique preimage of the node, which is again a node. Moreover, this point is an $A_n$-singularity of $X'$ unless $L'=bc$, in which case it is smooth. Hence the fiber $F_c''$ is a reduced nodal curve. Since the base curve is smooth and the map $q:X'\to C'$ is proper, the fiber $F_c''$ is connected.

From the above discussion, we obtain the following facts concerning the surjective map
\[
F_c''\longrightarrow (F'_c)^{\mathrm{red}}=F_c^{\mathrm{red}}.
\]

\begin{itemize}
    \item[(Fact 1)]\label{F1}
    The preimage of every node is a node, and the preimage of every smooth point is smooth.

    \item[(Fact 2)]\label{F2}
    Let $C_1$ and $C_2$ be two irreducible components of $F_c^{\mathrm{red}}$ meeting at a node $n$. Then every irreducible component $C_1'$ lying over $C_1$ contains a node $n'$ lying over $n$, which is shared with an irreducible component $C_2'$ lying over $C_2$. Consequently, if $N_1$ and $N_2$ denote the numbers of irreducible components lying over $C_1$ and $C_2$, respectively, then the number of preimages of $n$ is at least
    \[
    \max\{N_1,N_2\}.
    \]
\end{itemize}

Let $N$ and $N''$ denote the numbers of nodes of $F_c^{\mathrm{red}}$ and $F_c''$, respectively, and let $r$ and $r''$ denote their numbers of irreducible components. Let $G$ and $G''$ be the corresponding dual graphs. We claim that
\[
N''-r''+1\geq N-r+1.
\]

Henceforth, for any graph $H$, we denote
\[
\#\{\text{edges of }H\}-\#\{\text{vertices of }H\}+1
\]
by $h^1(H)$. In this notation, our goal is to prove
\[
h^1(G'')\geq h^1(G).
\]

We first prove the following reduction. Suppose $\Gfrak$ is a connected subgraph of $G$ (need not be induced), and let $\Gfrak''$ denote its preimage in $G''$. If
\[
h^1(\Gfrak'')\geq h^1(\Gfrak),
\]
then
\[
h^1(G'')\geq h^1(G).
\]

Indeed, if $\Gfrak=G$, there is nothing to prove. Otherwise, choose a vertex
\[
v\in G\setminus\Gfrak.
\]
Suppose $v$ has $n$ preimages in $G''$, and that $v$ is connected to vertices $v_1,\ldots,v_k$ of $\Gfrak$ by $e_1,\ldots,e_k$ edges, respectively. Fix one such vertex $v_i$. By Fact~2, the preimages of $v$ share at least $n e_i$ edges with the preimages of $v_i$.

Let $\hat{\Gfrak}$ be the induced subgraph generated by $\Gfrak$ together with $v$, and let $\hat{\Gfrak}''$ be the induced subgraph generated by $\Gfrak''$ together with all preimages of $v$. Then
\[
h^1(\hat{\Gfrak}'')
\geq
h^1(\Gfrak'')
+n\left(\sum_{i=1}^{k}e_i-1\right)
\geq
h^1(\Gfrak)
+\sum_{i=1}^{k}e_i-1
=
h^1(\hat{\Gfrak}).
\]

Since $G$ is finite, repeating this process finitely many times proves the claim.

It therefore remains to construct a non-empty subgraph $\Gfrak$ satisfying the hypothesis. By construction, there exist at least two vertices $v_1,v_2$ joined by at least one edge. First suppose that the number of edges joining $v_1$ and $v_2$ is $e>1$. Let $\Gfrak$ be the induced subgraph generated by these two vertices. Suppose that $v_1$ and $v_2$ have $n_1$ and $n_2$ preimages, respectively, with $n_1\geq n_2$. By Fact~2, the number of edges between the preimages of $v_1$ and $v_2$ is at least $n_1e$. Hence
\[
h^1(\Gfrak'')
\geq
n_1e-n_1-n_2+1,
\]
while
\[
h^1(\Gfrak)=e-2+1=e-1.
\]
Now we have the following inequalities:
\begin{align*}
    h^1(\Gfrak'')
    &\geq n_1\cdot e-2n_1+1\\
    &=(n_1)\cdot (e-2)+1\\
    &\geq e-2+1=e-1=h^1(\Gfrak).
\end{align*}
The last inequality follows from the assumption that $e\geq2$. Thus $\Gfrak$ satisfies the required property.

Therefore, it remains to consider the case in which there is at most one edge between any two vertices. We first assume that $G$ contains a cycle. Let $\Gfrak$ be such a cycle. Observe that, for a cycle in which every pair of adjacent vertices is connected by a single edge, the quantity $h^1$ is equal to $1$.

Let the vertices of the cycle be $v_1,\ldots,v_k$, where there is an edge between $v_i$ and $v_{i+1}$ for $i=1,\ldots,k$, and we set $v_{k+1}=v_1$. Let $n_i$ denote the number of preimages of $v_i$, and define
\[
n_{i,i+1}=\max\{n_i,n_{i+1}\}.
\]
Then, exactly as before,
\begin{equation}
    h^1(\Gfrak'')
    \geq
    \sum_{i=1}^{k}n_{i,i+1}
    -
    \sum_{i=1}^{k}n_i
    +1
    =
    \sum_{i=1}^{k}(n_{i,i+1}-n_i)+1
    \geq
    1.
\end{equation}
Hence the desired inequality holds in this case as well.

The only remaining case is when $G$ contains no cycles and there is at most one edge between any two vertices. In other words, $G$ is a tree. For a tree, it is clear that
\[
h^1(G)=0.
\]
On the other hand, $G''$ is the dual graph of a connected nodal curve, and by the previous computation,
\[
h^1(G'')\geq0.
\]
Therefore, the desired inequality follows.

We are now ready to prove that
\[
\dim H^1(F_c'',\QBbb)\geq
\dim H^1((F_c')^{\mathrm{red}},\QBbb).
\]
Let $G''$ and $G'$ denote the dual graphs of $F_c''$ and $(F_c')^{\mathrm{red}}$, respectively. Then
\[
\dim H^1(F_c'',\QBbb)
=
\sum_i2g_a(C_i'')+h^1(G''),
\]
and
\[
\dim H^1((F_c')^{\mathrm{red}},\QBbb)
=
\sum_j2g_a(C_j')+h^1(G').
\]

Since the map
\[
F_c''\longrightarrow (F_c')^{\mathrm{red}}
\]
is surjective, every irreducible component of $(F_c')^{\mathrm{red}}$ is dominated by an irreducible component of $F_c''$. Hence
\[
\sum_i2g_a(C_i'')
\geq
\sum_j2g_a(C_j').
\]
Moreover, we proved in the previous paragraph that
\[
h^1(G'')\geq h^1(G').
\]
Combining these two inequalities gives
\[
\dim H^1(F_c'',\QBbb)
\geq
\dim H^1((F_c')^{\mathrm{red}},\QBbb),
\]
which completes the proof.
\end{proof}
\end{theorem}

\bibliographystyle{alpha}
\bibliography{references}

\end{document}